\documentclass{amsart}
\usepackage{amssymb,amsmath,amsthm,amsfonts, amsaddr}
\usepackage{mathtools}

\usepackage{url}

\usepackage{enumitem}
\setlist[enumerate]{}

\usepackage{xcolor} 
\usepackage{tikz} 
\usepackage{tikz-cd}

\newtheorem{thm}[subsection]{Theorem}
\newtheorem{defn}[subsection]{Definition}

\newtheorem{cor}[subsection]{Corollary}

\theoremstyle{definition}

\newtheorem{rmk}[subsection]{Remark}

\begin{document}

\begin{abstract}
 We show that an idempotent lies in the center if it commutes with the other idempotents in the ring.  Furthermore, we introduce a partition of the set of idempotents and show that the automorphisms of the ring act transitively on each equivalence class. (Mathematics Subject Classification : 05C25, 16B99)
 \end{abstract}

\title{Centrality and Partition of Idempotents}

\author{Vineeth Chintala}

\address{\small{Indian Institute of Science, Bengaluru, India.}}

\email{vineethreddy90@gmail.com}

\maketitle

\section{Introduction}

Idempotents play a critical role in the structure of rings.  We first show that an idempotent lies in the center if it commutes with the other idempotents.  Furthermore, we partition the set of idempotents into equivalence classes and show that  
\begin{enumerate}[label=\roman*.]
 \item If $e_1, e_2$ are two idempotents in the same equivalence class, then there is an automorphism taking $e_1$ to $e_2$. (Theorem~\ref{thm5}).
 \item For finite rings over $\mathbb{F}_q$, we prove that the number of idempotents in an equivalence class is $q^k$ for some $k$ (Theorem ~\ref{count}).
    \end{enumerate}

\subsection{Notation}
$R$ can be any associative ring with identity. An element $x$ is called an idempotent if $x^2=x$.
An element $c$ is said to be in the center of the ring if $cx =xc$ for all elements $x \in R$.

%


\section{ Lying in the center}\label{id}

The results in this section are presumably known, though probably not in the version stated here.
The importance of the following theorem will become clear when we introduce a partition of a graph of idempotents. 

\begin{thm}\label{thm0}
Let $e$ be an idempotent in $R$. If $e$ does not lie in the center, then $R$ contains another idempotent $e'$ such that  
\[ (ee' =e, {\hskip 2mm} e'e =e')
\text{\hskip 5mm  or \hskip 5mm }
(ee' =e', {\hskip 2mm} e'e=e).\]
\end{thm}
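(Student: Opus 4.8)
The plan is to work with the Peirce decomposition of $R$ relative to $e$ and the complementary idempotent $f := 1-e$. The first step is to record the elementary reformulation of centrality: $e$ lies in the center of $R$ if and only if $exf = 0$ and $fxe = 0$ for every $x \in R$. Indeed, if $e$ is central then $exf = xef = x(e-e^2) = 0$, and similarly $fxe = 0$; conversely, writing $ex = exe + exf$ and $xe = exe + fxe$, the vanishing of $exf$ and $fxe$ forces $ex = exe = xe$. Consequently, if $e$ is \emph{not} central, there is an element $a \in R$ with $eaf \neq 0$ or $fae \neq 0$.

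In the first case, $eaf \neq 0$, I would take $e' := e + eaf$. Using $fe = 0$, the cross term $(eaf)e$ and the product $(eaf)(eaf)$ both vanish, so $(e')^2 = e^2 + eaf = e'$; thus $e'$ is an idempotent, and $e' - e = eaf \neq 0$ shows it is genuinely new. Multiplying out gives $ee' = e + eaf = e'$ and $e'e = e + eafe = e$, so the pair $(e,e')$ satisfies $ee' = e'$ and $e'e = e$, which is the second alternative in the statement.

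In the remaining case, $fae \neq 0$, the symmetric choice $e' := e + fae$ works: once more $fe = 0$ kills the cross terms, so $e'$ is an idempotent distinct from $e$, and this time $ee' = e + efae = e$ while $e'e = e + fae = e'$, yielding $ee' = e$ and $e'e = e'$, the first alternative.

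I do not anticipate a real obstacle: the only genuine idea is the observation that failure of centrality is detected precisely by a nonzero off-diagonal Peirce component $eaf$ or $fae$, after which the two candidate idempotents $e + eaf$ and $e + fae$ essentially write themselves. The only point needing a little care is verifying that the constructed element is indeed idempotent and indeed differs from $e$, and both facts are immediate from $e^2 = e$ and $(1-e)e = 0$.
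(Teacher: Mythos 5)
Your proof is correct and is essentially the paper's own argument in Peirce-decomposition clothing: the paper's elements $u = be - ebe = (1-e)be$ and $v = eb - ebe = eb(1-e)$ are exactly your $fae$ and $eaf$, and the candidate idempotents $e+u$, $e+v$ coincide with yours. The only cosmetic difference is that you split into the two cases $eaf \neq 0$ and $fae \neq 0$ explicitly, while the paper observes that $u \neq v$ forces at least one of them to be nonzero.
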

\begin{proof}
Since $e$ does not lie in the center, there is an element $b \in R$ such that 
$be - ebe \neq eb - ebe.$
Let $u = be - ebe$ and $v = eb - ebe$. Then $u^2 = v^2 = 0$.
\vskip 2mm
\noindent The idempotents $e_l = e +u$ and $e_r = e + v$ satisfy the required properties 
\[ee_r =e, \hskip 2mm e_re =e_r \hskip 5mm | \hskip 5mm ee_l =e_l, \hskip 2mm e_le=e.\]
Clearly $e_r, e_l$ are distinct from $e$ whenever $u, v$ are non-zero.
Since $u \neq v$, both $u,v$ can't be simultaneously zero.   
\end{proof}
\vskip 2mm

\vskip 2mm
\begin{cor}\label{cor0}
Let $e$ be an idempotent in $R$.  If $e$ commutes with all the idempotents of $R$, then $e$ lies in the center.
\end{cor}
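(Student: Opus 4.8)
The plan is to deduce this immediately from Theorem~\ref{thm0} by contraposition. Suppose $e$ is an idempotent that commutes with every idempotent of $R$, and assume for contradiction that $e$ does not lie in the center. Then Theorem~\ref{thm0} supplies an idempotent $e'$, necessarily distinct from $e$, satisfying either $(ee' = e,\ e'e = e')$ or $(ee' = e',\ e'e = e)$.

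Next I would use the commutation hypothesis to collapse each case. In the first case, $ee' = e'e$ forces $e = e'$; in the second case, $ee' = e'e$ forces $e' = e$. Either way we contradict the fact that $e'$ is distinct from $e$. Hence the assumption fails and $e$ must lie in the center.

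There is essentially no obstacle here: the content is entirely carried by Theorem~\ref{thm0}, and the corollary is just the observation that the two boundary relations produced there both degenerate to $e = e'$ once one knows $e$ and $e'$ commute. The only point requiring a word of care is that the $e'$ from Theorem~\ref{thm0} is genuinely different from $e$ — but this is already recorded in the proof of that theorem (the idempotents $e_r, e_l$ differ from $e$ precisely when the relevant nilpotents $u, v$ are nonzero, and they cannot both vanish since $u \neq v$). So the write-up will be short: state the contrapositive, invoke Theorem~\ref{thm0}, and eliminate both cases by substituting the commutation relation $ee' = e'e$.
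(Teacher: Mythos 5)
Your proposal is correct and is exactly the intended deduction: the paper states the corollary without proof precisely because it follows from Theorem~\ref{thm0} by contraposition, with the two relations $(ee'=e,\ e'e=e')$ and $(ee'=e',\ e'e=e)$ each collapsing to $e=e'$ under commutation, contradicting the distinctness of $e'$ guaranteed by that theorem. Nothing to add.
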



%


\section{Equivalence classes of Idempotents}\label{idpart}

 Let $\gamma(R)$ be a directed graph, where the vertices are idempotents of $R$, and there is an edge $e \longrightarrow e'$ if and only if $ee' =0$. Then $e'$ is said to be an out-neighbour of $e$ and $e$ is said to be an in-neighbour of $e'$.  
 
 \begin{rmk}
 Zero-divisor graphs (for commutative rings) were introduced by I. Beck in \cite{B} and later extended to the noncommutative setting (see \cite{AM}, \cite{R}). Departing from the literature, we focus here on the idempotents (instead of all the zero-divisors) and prove results that hold for arbitrary noncommutative rings.
\end{rmk}

\begin{defn}
\hfill 
\begin{itemize}
\item Let $e \sim_o e'$ if the idempotents $e, e'$ have the same out-neighbours in $\gamma(R)$. Clearly $\sim_o$ is an equivalence relation. 

 \item Let $\mathcal{O}_e := \{x \in R : ex =e ,xe = x\}$.
 \end{itemize}
\end{defn}

\noindent Observe that all elements of $\mathcal{O}_e$ are idempotents. Indeed if $ a \in \mathcal{O}_e$, then $a^2 = (ae)a = a(ea) =ae =a$.

\begin{thm}\label{thm4}
$e' \in \mathcal{O}_e$ if and only if $e' \sim_o e $.
\end{thm}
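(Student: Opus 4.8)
The plan is to prove the two implications separately, using throughout that $R$ has an identity, so that both $1-e$ and $1-e'$ are idempotents and hence vertices of $\gamma(R)$.

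For the forward implication, assume $e' \in \mathcal{O}_e$, that is, $ee' = e$ and $e'e = e'$ (recall $e'$ is then automatically an idempotent, by the observation preceding the theorem). Let $f$ be any idempotent. If $ef = 0$, then $e'f = (e'e)f = e'(ef) = 0$; conversely, if $e'f = 0$, then $ef = (ee')f = e(e'f) = 0$. Thus $e$ and $e'$ kill exactly the same idempotents on the right, i.e.\ they have the same out-neighbours in $\gamma(R)$, so $e' \sim_o e$.

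For the reverse implication, assume $e' \sim_o e$. The key idea is to test the hypothesis not against a general idempotent but against the two complementary idempotents $1-e$ and $1-e'$. Since $e(1-e) = e - e^2 = 0$, the idempotent $1-e$ is an out-neighbour of $e$, hence also of $e'$ (they have the same out-neighbours), so $e'(1-e) = 0$, which rearranges to $e'e = e'$. Symmetrically, $e'(1-e') = 0$ shows $1-e'$ is an out-neighbour of $e'$, hence of $e$, so $e(1-e') = 0$, i.e.\ $ee' = e$. These are precisely the defining conditions for $e' \in \mathcal{O}_e$.

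I do not expect a genuine obstacle: the only nonroutine step is realizing that plugging $1-e$ and $1-e'$ into the relation $\sim_o$ immediately recovers both identities $e'e = e'$ and $ee' = e$. One should just be careful that the symmetry of $\sim_o$ is invoked legitimately in the second half (it is, since $\sim_o$ is an equivalence relation) and that $e'$ really is an idempotent (also already noted in the text), so that it is a vertex of $\gamma(R)$ in the first place.
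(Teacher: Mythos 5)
Your proof is correct and takes essentially the same route as the paper's: the reverse direction tests $\sim_o$ against the complementary idempotents $1-e$ and $1-e'$ to extract $e'e=e'$ and $ee'=e$, and the forward direction uses associativity via $e'f=(e'e)f$ and $ef=(ee')f$. The only difference is that you write out both containments of the out-neighbour sets, where the paper records just one and leaves the symmetric half implicit.
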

\begin{proof}
Let $e' \sim_o e$. Then $e'(1-e) =0$. Therefore $e' = e'e$. Similarly $e(1-e') =0$ and so $e = ee'$.
For the converse, suppose $er =0$. Then $e'r = (e'e)r = 0$.
\end{proof}

Similarly one can define an equivalence relation $\sim_i$ corresponding to the in-neighbours in $\gamma(R)$.

\begin{tabular}{lcl}
\multicolumn{3}{c}{} \\
\hline
 		   & Graph-theoretic & Algebraic \\
		   \hline
\\
 $e' \sim_o e$        & same out-neighbours               		&  $ee' = e$ and $e'e =e'$  \\
 $e' \sim_i e$        & same in-neighbours                  		& $ee' = e'$ and $e'e =e$   \\\\
\end{tabular}

\begin{defn}
Let $\mathcal{I}_e = \{x \in R : ex =x ,xe = e\}$. 
\end{defn}

It follows from the definitions that $\mathcal{I}_e \cap \mathcal{O}_e = \{e\}$. In other words, no two idempotents have the same (in and out) neighbours in $\gamma(R)$.

\subsection{Automorphisms of $\gamma(R)$}
 Any automorphism of the ring translates to an automorphism of the graph. 
\vskip 2mm

\begin{thm}\label{thm5}
Let $e_{1} \sim_o e_2 $. Then there is an inner automorphism of $R$ which sends $e_1$ to $e_2$.  
\end{thm}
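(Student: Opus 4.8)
The plan is to produce an \emph{explicit} inner automorphism, i.e.\ to find a unit $u \in R$ with $u e_1 u^{-1} = e_2$ and then take $\varphi(x) = u x u^{-1}$. Since $e_1 \sim_o e_2$, Theorem~\ref{thm4} (or the table above) translates the hypothesis into the two identities $e_1 e_2 = e_1$ and $e_2 e_1 = e_2$, and these are the only facts about $e_1, e_2$ I intend to use. The natural candidate, suggested by the shape of these relations, is $u = 1 - e_1 + e_2$.

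First I would check that $u$ is a unit. The guess for its inverse is the ``mirror'' element $v = 1 - e_2 + e_1$, obtained by interchanging $e_1$ and $e_2$; this interchange is harmless because the pair of relations $e_1 e_2 = e_1$, $e_2 e_1 = e_2$ is symmetric under swapping $e_1 \leftrightarrow e_2$. Expanding $uv$ and $vu$ and repeatedly applying $e_i^2 = e_i$, $e_1 e_2 = e_1$, $e_2 e_1 = e_2$ makes all the idempotent terms cancel in pairs, leaving $uv = vu = 1$. Hence $u$ is a unit with $u^{-1} = 1 - e_2 + e_1$.

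Next I would verify the conjugation relation in the multiplicative form $u e_1 = e_2 u$, which sidesteps any manipulation of $u^{-1}$. A direct computation gives $u e_1 = e_1 - e_1^2 + e_2 e_1 = e_1 - e_1 + e_2 = e_2$ and $e_2 u = e_2 - e_2 e_1 + e_2^2 = e_2 - e_2 + e_2 = e_2$, so $u e_1 = e_2 = e_2 u$ and therefore $u e_1 u^{-1} = e_2$. Conjugation by $u$ is then the desired inner automorphism of $R$.

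I do not expect a genuine obstacle here: once the unit $u = 1 - e_1 + e_2$ is written down, every step is a one-line verification using only idempotency and the two defining relations, so the real content is simply spotting the right $u$. As a sanity check one may note that the same $u$ also carries $1 - e_1$ to $1 - e_2$; and the $\sim_i$ analogue follows by applying this statement in the opposite ring, equivalently by conjugating by $u^{-1}$ instead of $u$.
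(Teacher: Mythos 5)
Your proof is correct and is essentially the paper's own argument: the paper conjugates by $1+u$ with $u=e_2-e_1$, which is exactly your unit $1-e_1+e_2$, and it obtains the inverse $1-e_2+e_1$ from the observation $u^2=0$ rather than by direct expansion. The verification via $u e_1 = e_2 = e_2 u$ is the same computation in multiplicative form.
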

\begin{proof}
Take $u = e_2-e_1$. Then $u^2 =0$ and $(1+u)^{-1} = 1-u$. 
Consider the inner automorphism  of $R$ where \[x \rightarrow (1+u)x(1-u).\]
This automorphism takes $e_1$ to $e_2$.
\end{proof}

Therefore there are at least $|\mathcal{O}_e|$ distinct automorphisms stabilizing $\mathcal{O}_e$. Similarly if $e_{1} \sim_i e_2 $, then there is an automorphism which sends $e_1$ to $e_2$.

\vskip 2mm
 Theorem $\ref{thm0}$ tells us that at least one of the sets $\mathcal{I}_e, \mathcal{O}_e$ is nontrivial whenever $e$ is not in the center. In fact we can use the equivalence relations to say more.

\begin{thm} \label{count}
Let $R$ be a finite algebra over $\mathbb{F}_q $. Suppose $e$ is a non-central idempotent, and $\mathcal{X}_e \in \{\mathcal{I}_e, \mathcal{O}_e \}$. Then $|\mathcal{X}_e| = q^k$ for some integer $k$. 
\end{thm}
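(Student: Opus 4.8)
The plan is to realize $\mathcal{X}_e$ as a coset of an $\mathbb{F}_q$-linear subspace of $R$, and then invoke the fact that a finite algebra over $\mathbb{F}_q$ is a finite-dimensional $\mathbb{F}_q$-vector space, so that every subspace has cardinality a power of $q$.

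Consider first $\mathcal{X}_e = \mathcal{O}_e = \{x \in R : ex = e,\ xe = x\}$. I would introduce the set
\[ N_e := \{u \in R : eu = 0,\ ue = u\} \]
and check that it is an $\mathbb{F}_q$-subspace of $R$: it contains $0$, and both defining relations $eu = 0$ and $ue = u$ are preserved under addition and under multiplication by a scalar $\lambda \in \mathbb{F}_q$ (using that scalars are central in $R$). Then I would show that translation by $e$,
\[ \varphi : \mathcal{O}_e \longrightarrow N_e, \qquad \varphi(x) = x - e, \]
is a bijection. Well-definedness: if $x \in \mathcal{O}_e$ and $u = x - e$, then $eu = ex - e = 0$ and $ue = xe - e = x - e = u$, so $u \in N_e$. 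Inverse: given $u \in N_e$, set $x = e + u$; then $ex = e + eu = e$ and $xe = e + ue = e + u = x$, so $x \in \mathcal{O}_e$, and $u \mapsto e + u$ is plainly a two-sided inverse of $\varphi$. Hence $|\mathcal{O}_e| = |N_e|$, and since $R$, being a finite $\mathbb{F}_q$-algebra, is a finite-dimensional $\mathbb{F}_q$-vector space, its subspace $N_e$ has $|N_e| = q^{k}$ with $k = \dim_{\mathbb{F}_q} N_e$. The case $\mathcal{X}_e = \mathcal{I}_e$ is symmetric: one runs the same argument with the subspace $\{u \in R : eu = u,\ ue = 0\}$ and the same translation $x \mapsto x - e$, or equivalently observes that $\mathcal{I}_e$ in $R$ is $\mathcal{O}_e$ in the opposite algebra $R^{\mathrm{op}}$, which is again finite over $\mathbb{F}_q$.

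I do not anticipate a genuine obstacle: the whole content is the observation that $\mathcal{X}_e$ is an affine subspace of $R$. The only points deserving (routine) care are that the translate $N_e$ is closed under the $\mathbb{F}_q$-action — this is where working over a field $\mathbb{F}_q$, rather than merely over $\mathbb{Z}$, is used — and that $\varphi$ is a bijection. It is worth noting in passing that non-centrality of $e$ is not needed for the identity $|\mathcal{X}_e| = q^k$ itself (for central $e$ one gets $\mathcal{X}_e = \{e\}$ and $k = 0$); its role, via Theorem~\ref{thm0}, is to guarantee that at least one of $\mathcal{I}_e, \mathcal{O}_e$ has $k \ge 1$, which is the reason it is included in the hypotheses.
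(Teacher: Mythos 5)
Your proof is correct, and it reaches the same underlying fact as the paper --- that $\mathcal{X}_e$ is an affine subspace of the $\mathbb{F}_q$-vector space $R$ --- but by a different route. The paper works inside $V = \operatorname{span}_{\mathbb{F}_q}(\mathcal{X}_e)$: it picks a basis $\{e_1,\dots,e_n\}$ of $V$ consisting of idempotents from $\mathcal{X}_e$, observes that $v=\sum c_ie_i$ satisfies $v^2=(\sum c_i)v$, identifies $\mathcal{X}_e$ with the hyperplane $\sum c_i = 1$, and counts $q^{n-1}$ points. You instead translate by $e$ and identify $\mathcal{X}_e$ with the linear subspace $N_e=\{u: eu=0,\ ue=u\}$ (resp.\ its mirror for $\mathcal{I}_e$), cut out by two $\mathbb{F}_q$-linear conditions in $R$. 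Your version is a bit more economical: it needs no choice of basis, no verification that a basis can be extracted from $\mathcal{X}_e$, and no check that every combination with $\sum c_i=1$ actually lands back in $\mathcal{X}_e$ (the step in the paper that quietly uses $e\neq 0$ so that $(\sum c_i)e=e$ forces $\sum c_i=1$). Your closing remark is also accurate: non-centrality is not needed for the counting statement itself, only to guarantee via Theorem~\ref{thm0} that one of the two classes is nontrivial. One small dividend of the paper's formulation is that it exhibits $k$ as $\dim V - 1$, i.e.\ one less than the dimension of the span of the equivalence class, which matches your $k=\dim N_e$ since $V=\mathbb{F}_q e\oplus N_e$.
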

\begin{proof}
Let $V$ be the vector space over $\mathbb{F}_q$ generated by the elements of $\mathcal{X}_e$. Let $v = \sum\limits_{i=1}^n c_ie_i$, where $e_i \in \mathcal{X}_e$ and $c_i  \in \mathbb{F}_q $.  Then \[v^2 =\big( \small {\sum\limits_{i=1}^n }c_i \big)v.\]
We can choose a basis of $V$ consisting of idempotents $\{e_1, \cdots, e_n \} \subseteq \mathcal{X}_e$.  Any element of $\mathcal{X}_e$ is of the form $e = \sum\limits_{i=1}^n c_ie_i$, where $\sum\limits_{i=1}^n c_i = 1$. There are $q^{n-1}$ such elements.
\end{proof}
\vskip 5mm

%
%


%


\vskip 2mm

\textbf{Acknowledgements.}  The author was supported by the DST-Inspire faculty fellowship in India.

\vskip 2mm

\end{document}